\newtheorem{theorem}{Theorem}
\newtheorem{lemma}[theorem]{Lemma}
\newtheorem{conjecture}{Conjecture}
\title{A proof of Brouwer's toughness conjecture}
\author{Xiaofeng Gu\thanks{University of West Georgia, Carrollton, GA 30118 ({\tt xgu@westga.edu}).
This research was partially supported by a grant from the Simons Foundation (522728, XG).}
}
\begin{document}
\date{}
\maketitle
\noindent

\begin{abstract}
The toughness $t(G)$ of a connected graph $G$ is defined as $t(G)=\min\{\frac{|S|}{c(G-S)}\}$,
in which the minimum is taken over all proper subsets $S\subset V(G)$ such that $c(G-S)>1$,
where $c(G-S)$ denotes the number of components of $G-S$.
Let $\lambda$ denote the second largest absolute eigenvalue of the adjacency matrix of a graph.
For any connected $d$-regular graph $G$, it has been shown by Alon that
$t(G)>\frac{1}{3}(\frac{d^2}{d\lambda+\lambda^2}-1)$, through which,
Alon was able to show that for every $t$ and $g$ there are $t$-tough graphs of girth strictly greater than $g$,
and thus disproved in a strong sense a conjecture of Chv\'atal on pancyclicity.
Brouwer independently discovered a better bound $t(G)>\frac{d}{\lambda}-2$ for any connected $d$-regular graph $G$, 
while he also conjectured that the lower bound can be improved to $t(G)\ge \frac{d}{\lambda} - 1$. 
We confirm this conjecture.
\end{abstract}

{\small \noindent {\bf MSC:} 05C42  05C50}

{\small \noindent {\bf Key words:} toughness; eigenvalue; expander mixing lemma; pseudo-random graph}

\section{The conjecture}
We use $\lambda_i(G)$ to denote the $i$th largest eigenvalue of the adjacency matrix of a simple graph $G$ on $n$ vertices,
for $i=1,2,\cdots, n$. By the Perron-Frobenius Theorem, $\lambda_1$ is always positive (unless $G$ has no edges) and 
$|\lambda_i|\le \lambda_1$ for all $i\ge 2$. Let $\lambda = \max_{2\le i\le n} |\lambda_i| =\max\{|\lambda_2|, |\lambda_n|\}$, 
that is, $\lambda$ is the second largest absolute eigenvalue. For a $d$-regular graph, it is well known that $\lambda_1=d$.

Let $c(G)$ denote the number of components of a graph $G$.
The {\bf toughness} $t(G)$ of a connected graph $G$ is defined as $t(G)=\min\{\frac{|S|}{c(G-S)}\}$,
where the minimum is taken over all proper subsets $S\subset V(G)$ such that $c(G-S)>1$.
A graph $G$ is {\bf $k$-tough} if $t(G)\ge k$. 
Graph toughness was introduced by Chv\'atal \cite{Chva73} in 1973 and is closely related to 
many graph properties, including connectivity, Hamiltonicity, pancyclicity, factors, spanning trees, etc.

The study of toughness from eigenvalues was initiated by Alon~\cite{Alon95} who showed that
for any connected $d$-regular graph $G$, $t(G)>\frac{1}{3}(\frac{d^2}{d\lambda+\lambda^2}-1)$, through which,
Alon was able to show that for every $t$ and $g$ there are $t$-tough graphs of girth strictly greater than $g$.
This strengthened a result of Bauer, Van den Heuvel and Schmeichel \cite{BaVS95} who showed the same for $g=3$, and 
thus disproved in a strong sense a conjecture of Chv\'atal \cite{Chva73} that there exists a constant $t_0$ such that 
every $t_0$-tough graph is pancyclic.

Brouwer~\cite{Brou95} independently discovered a better bound and showed that $t(G)>\frac{d}{\lambda}-2$ for a connected 
$d$-regular graph $G$. He mentioned in~\cite{Brou95} that the bound might be able to be improved a little to $\frac{d}{\lambda}-1$,
and then posed the exact conjecture as an open problem in \cite{Brou96}. 
Some partial results have been provided in \cite{CiGu16}. However, no substantial progress has been made for 
more than two decades. Most recently, the author improved the bound to $t(G) > \frac{d}{\lambda} - \sqrt{2}$ in \cite{Gu21}.

\begin{conjecture}[Brouwer~\cite{Brou95, Brou96}]
\label{tconj}
For any connected $d$-regular graph $G$, $t(G)\ge \frac{d}{\lambda}-1$.
\end{conjecture}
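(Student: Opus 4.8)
The plan is to argue by contradiction. Suppose $t(G) < \frac{d}{\lambda}-1$ and let $S\subset V(G)$ be a proper subset realizing the toughness, so that, writing $s=|S|$ and $c=c(G-S)>1$, we have $\frac{s}{c}=t(G)<\frac{d-\lambda}{\lambda}$, equivalently $c>\frac{s\lambda}{d-\lambda}$. Let $C_1,\dots,C_c$ be the components of $G-S$, put $n_i=|C_i|$, $m=\sum_i n_i=n-s$, and $\overline{S}=V(G)\setminus S$. The structural fact driving everything is that there are no edges between distinct components: every edge leaving $C_i$ lands in $S$, so $e(C_i,S)=dn_i-2e(C_i)$ (with $e(C_i)$ the number of internal edges of $C_i$) and $\sum_i e(C_i,S)=e(S,\overline{S})$.

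First I would invoke the strong form of the expander mixing lemma, which for any $X,Y\subseteq V(G)$ bounds $\bigl|e(X,Y)-\frac{d}{n}|X||Y|\bigr|$ by $\frac{\lambda}{n}\sqrt{|X||Y|(n-|X|)(n-|Y|)}$. Applying it to each pair $(C_i,\overline{S}\setminus C_i)$ -- between which there are no edges -- and to the pair $(S,\overline{S})$ yields, on the one hand, the per-component lower bound $e(C_i,S)\ge(d-\lambda)\frac{n_i(n-n_i)}{n}$, and on the other the global upper bound $e(S,\overline{S})\le(d+\lambda)\frac{sm}{n}$. Summing the former and comparing with the latter produces a single inequality relating $s$, $c$, and the sizes $n_i$.

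A cleaner, essentially equivalent route uses eigenvalue interlacing applied to the quotient matrix $B$ of the partition $\{S,C_1,\dots,C_c\}$. Because there are no edges between components, $B$ is an arrowhead matrix whose only nonzero off-diagonal entries couple $S$ to each $C_i$; its largest eigenvalue is $d$, and interlacing forces $\lambda_2(B),\dots,\lambda_{c+1}(B)\in[-\lambda,\lambda]$. Reading off the arrowhead secular equation, the condition $\lambda_2(B)\le\lambda$ translates into the statement that all but at most one component satisfy $e(C_i,S)>(d-\lambda)n_i$; isolating the possible exceptional (large) component and feeding its mixing-lemma bound back into the count of $e(S,\overline{S})$ then constrains $c$.

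The main obstacle is sharpness: a naive summation of the above only yields $t(G)>\frac12\bigl(\frac{d}{\lambda}-1\bigr)$, losing a factor of two. The loss is structural -- the bound $e(C_i,S)\ge(d-\lambda)\frac{n_i(n-n_i)}{n}$ is essentially tight for a single large component but wasteful for small ones, since a singleton sends exactly $d$, not $d-\lambda$, edges into $S$. The crux is therefore to close this gap uniformly across all component-size regimes: in the many-small-component regime one replaces the mixing-lemma lower bound by the near-exact count $e(C_i,S)\approx dn_i$ (using that each $C_i$ is connected), while in the presence of a single dominant component the mixing-lemma bound together with the upper bound on $e(S,\overline{S})$ already forces the remaining components to have total size below $\frac{s\lambda}{d-\lambda}$. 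Reconciling these two regimes through a convexity and optimization argument in the variables $n_i$, subject to $\sum_i n_i=m$ and $n_i\ge1$, is the delicate step, and it is precisely what upgrades the constant from $\frac12\bigl(\frac{d}{\lambda}-1\bigr)$ to the conjectured $\frac{d}{\lambda}-1$.
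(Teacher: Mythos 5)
Your proposal is not a proof; it is a plan that correctly diagnoses the central difficulty and then leaves exactly that difficulty unresolved. Your two tools (per-component mixing-lemma bounds $e(C_i,S)\ge(d-\lambda)\frac{n_i(n-n_i)}{n}$ summed against $e(S,\overline{S})\le(d+\lambda)\frac{sm}{n}$, or equivalently quotient-matrix interlacing giving $e(C_i,S)\ge(d-\lambda)n_i$ for all but one component) are sound, but as you yourself observe they only yield a bound of the shape $\frac12\bigl(\frac{d}{\lambda}-1\bigr)$; such summation arguments are essentially what produced the earlier, weaker bounds of Alon, Brouwer, and Cioab\u{a}--Gu. The step you describe as ``reconciling these two regimes through a convexity and optimization argument in the variables $n_i$'' is precisely the missing content: you give no inequality, no case analysis, and no mechanism by which the factor of two is recovered, and there is no evidence that optimizing the summed per-component bounds over $\{n_i\}$ can close the gap --- the loss is intrinsic to treating the components one at a time against $S$, not an artifact of slack in the size distribution.

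The paper's key idea, which your plan lacks, is to stop comparing each component with $S$ and instead compare two groups of components with \emph{each other}. After the easy reductions $c\le\frac{\lambda n}{d+\lambda}$ (mixing lemma applied to one vertex per component) and $|B|>\frac{2\lambda n}{d+\lambda}$ (else the bound follows immediately), one has $|B|\ge 2c+1$, and a subset-sum lemma of Brouwer and Haemers (if positive integers $x_1,\dots,x_c$ satisfy $\sum x_i\le 2c-1$, every value up to the total is realized by a subset) lets one split the components into two sides $X$ and $Y$ with $e(X,Y)=0$ and $|X|,|Y|\ge c$. A \emph{single} application of the mixing lemma to this balanced pair gives $|X||Y|\le\frac{\lambda^2}{d^2}(n-|X|)(n-|Y|)$, from which, using $|Y|=n-|S|-|X|$ and $|X|\le\frac{\lambda n}{d+\lambda}$, one extracts $|S|\ge\bigl(\frac{d}{\lambda}-1\bigr)|X|\ge\bigl(\frac{d}{\lambda}-1\bigr)c$ with no constant lost. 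The sharpness of the final bound comes entirely from this bipartition device, which is qualitatively different from any refinement of your per-component scheme; without it (or a genuine substitute), your outline cannot be completed to a proof of the conjecture.
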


It is mentioned by Brouwer~\cite{Brou95} that there are infinitely many graphs $G$ such that $t(G)\le d/\lambda$
with equality in many cases (for example, strongly regular graphs constructed in  \cite{Brou95} and \cite{CiWo14},
and many Kneser graphs \cite{POHBWWC}). Brouwer~\cite{Brou95} also pointed out that Conjecture~\ref{tconj}, if true, is tight. 
To see sharpness, one may notice that the toughness can be arbitrarily close to $0$, while $d/\lambda\ge 1$ holds for all 
$d$-regular graphs. Cioab\u{a} and Wong~\cite{CiWo14} briefly described an explicit construction of such extremal graphs.

In this paper, we confirm Conjecture~\ref{tconj}.
\begin{theorem}
\label{tofg}
Let $G$ be a connected $d$-regular graph. Then $\displaystyle t(G)\ge \frac{d}{\lambda}-1.$
\end{theorem}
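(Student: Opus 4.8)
The plan is to fix any $S\subset V(G)$ with $c=c(G-S)\ge 2$ components $C_1,\dots,C_c$; since $t(G)$ is a minimum, it suffices to prove $|S|/c\ge d/\lambda-1$ for every such $S$. Writing $s=|S|$ and $n_i=|C_i|$, this inequality is equivalent to the clean form $\lambda(s+c)\ge dc$, which is what I would actually establish. First I would record the one global edge identity available: as $G$ is $d$-regular and the $C_i$ are pairwise nonadjacent, $\sum_{i=1}^c e(C_i,S)=e(S,V\setminus S)=ds-2e(S)\le ds$. Setting $\beta_i=e(C_i,S)/n_i$, the average number of edges from a vertex of $C_i$ into $S$, this reads $\sum_i \beta_i n_i\le ds$, and each $\beta_i\in(0,d]$ because connectivity of $G$ forces at least one edge from every component to $S$.

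The main tool is eigenvalue interlacing applied to the quotient matrix $B$ of the partition $\{S,C_1,\dots,C_c\}$. All row sums of $B$ equal $d$, so it carries the Perron eigenvalue $d$ with eigenvector $\mathbf 1$, and Haemers' interlacing places every other eigenvalue of $B$ in $[-\lambda,\lambda]$; hence $\lambda$ is at least the second largest absolute eigenvalue of $B$. It therefore suffices to prove the purely matrix-theoretic statement that this $(c+1)\times(c+1)$ arrowhead matrix (diagonal $d-\beta_i$ on the component blocks, with the last row and column carrying the $S$–$C_i$ interaction) has an eigenvalue $\mu\neq d$ with $|\mu|\ge \frac{dc}{s+c}$. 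Equivalently, using part-constant test vectors $x$ (value $u_i$ on $C_i$ and $u_S$ on $S$, with $\sum_i n_iu_i+su_S=0$), one has the identity $\frac{x^\top Ax}{x^\top x}=d-\frac{\sum_i\beta_i n_i(u_i-u_S)^2}{\sum_i n_iu_i^2+su_S^2}$, so I must exhibit one such $x$ whose Rayleigh quotient is $\ge \frac{dc}{s+c}$ (pushing $\lambda_2$ up) or $\le-\frac{dc}{s+c}$ (pushing $\lambda_n$ down).

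Next I would extract the spectrum of $B$ through its secular equation: writing $\theta=\mu-d$, the nontrivial eigenvalues are exactly the $c$ roots of $g(\theta):=\sum_{i=1}^c \frac{\beta_i n_i}{\theta+\beta_i}=-s$, which interlace the poles $-\beta_i$. The extreme roots control $\mu_2$ (just below $d-\min_i\beta_i$) and $\mu_{c+1}$ (below $d-\max_i\beta_i$), and the target becomes $\max(\mu_2,-\mu_{c+1})\ge \frac{dc}{s+c}$. Assuming it fails, every nontrivial eigenvalue lies in the open window $(-\tau,\tau)$ with $\tau=\frac{dc}{s+c}$, i.e. every root satisfies $\theta<\theta_+:=\tau-d=-\frac{ds}{s+c}$. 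Evaluating the factored form $g(\theta)+s=\frac{s\prod_k(\theta-r_k)}{\prod_i(\theta+\beta_i)}$ at $\theta_+$ and combining with the budget $\sum_i\beta_in_i\le ds$ should then produce the contradiction.

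I expect the decisive obstacle to be precisely this last step. A single worst component does not suffice: one nearly-disconnected component (tiny $\beta_i$) pushes $\mu_2$ close to $d$, whereas many components with large $\beta_i$ push $\mu_{c+1}$ very negative, and the content of the conjecture is that these two effects cannot both be dilute. The sign of $g(\theta_+)+s$ flips with the parity of the number of components having $\beta_i<\frac{ds}{s+c}$, so the naive evaluation splits into cases; converting the single global budget $\sum_i\beta_in_i\le ds$ into a clean two-sided bound on the arrowhead spectrum—rather than the lossy averaging or single-term estimates that yield only $\frac d\lambda-\sqrt2$—is where a genuinely new idea is needed. I would expect to supplement it with the expander mixing lemma bound $n_in_j\le(\lambda n/d)^2$, which caps the number of large components at one and thereby tames the case analysis.
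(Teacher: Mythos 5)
Your scaffolding is correct as far as it goes: the reduction of $t(G)\ge d/\lambda-1$ to $\lambda(s+c)\ge dc$ for every cut $S$, the quotient-matrix/interlacing frame (all nontrivial eigenvalues of the quotient lie in $[-\lambda,\lambda]$), and the secular equation $\sum_{i=1}^c \frac{\beta_i n_i}{\theta+\beta_i}=-s$ for the nontrivial spectrum are all right. But the proof is missing at exactly the point you flag yourself: you never establish $\max(\mu_2,-\mu_{c+1})\ge \frac{dc}{s+c}$, and you concede that converting the single budget $\sum_i\beta_i n_i\le ds$ into this two-sided spectral bound requires ``a genuinely new idea.'' That step is the entire content of the theorem; the surrounding machinery is essentially what underlies the earlier partial results ($d/\lambda-2$ and $d/\lambda-\sqrt2$), which is evidence that this general spectral framing alone tops out short of the conjecture. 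Moreover, you give no argument that the quotient-level claim is even true from the recorded data $(s,n_i,\beta_i)$ subject only to $0<\beta_i\le d$, $n_i\ge 1$ and the budget, and your proposed supplement $n_in_j\le(\lambda n/d)^2$ attacks the wrong regime: it caps the number of \emph{huge} components at one, whereas the hard case is many medium-sized components, where no single pole of the secular function is forced far from $-d$.

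It is instructive that the paper's missing ingredient is combinatorial rather than spectral, and can even be described inside your framework as choosing the right \emph{coarsening} of your $(c+1)$-part partition down to three parts $\{S,X,Y\}$. After two preliminary mixing-lemma reductions --- the Hoffman-type bound $c\le \frac{\lambda n}{d+\lambda}$, and directly dispatching the case $|B|\le \frac{2\lambda n}{d+\lambda}$ (where $B=V(G-S)$), which together force $|B|\ge 2c+1$ --- the paper proves via two claims that the components $V_1,\dots,V_c$ can always be regrouped into sides $X$ and $Y$ with $e(X,Y)=0$ and, crucially, $|X|\ge c$ \emph{and} $|Y|\ge c$. The engine is the Brouwer--Haemers subset-sum lemma (positive integers with sum at most $2c-1$ realize every value up to their sum as a subset sum), applied after trimming the $V_i$ to total size $2c-3$ when needed, so that a group of total size exactly $c-|V_c|$ can be peeled off to join $V_c$. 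One application of the expander mixing lemma to this balanced bipartition then gives $|X|\le \frac{\lambda}{d}\left(|S|+|X|\right)$, hence $|S|\ge\left(\frac{d}{\lambda}-1\right)|X|\ge\left(\frac{d}{\lambda}-1\right)c$, which is precisely your target $\lambda(s+c)\ge dc$. So the tension you correctly identified --- that one nearly-detached component and many strongly-attached ones ``cannot both be dilute'' --- is resolved not by a sharper analysis of the arrowhead spectrum but by aggregating components so that neither side of a single zero-edge bipartition is small; without that subset-sum regrouping, your sketch has no route to a contradiction.
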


\section{The proof}
Our main tool is the expander mixing lemma. A $d$-regular graph on $n$ vertices with the second largest absolute eigenvalue 
at most $\lambda$ is called an $(n, d, \lambda)$-graph. 
It is well known that an $(n, d, \lambda)$-graph  for which $\lambda = \Theta(\sqrt{d})$ is a very good pseudo-random graph 
behaving, in many aspects, like a truly random graph $G(n, p)$. The quantitative definition of pseudo-random graphs was introduced 
by Thomason~\cite{Thom85, Thom87} who defined jumbled graphs.

The celebrated expander mixing lemma is usually attributed to Alon and Chung~\cite{AlCh88}. 
This idea appeared earlier with a different form in the PhD thesis~\cite{Haem79} of Haemers.
We present this lemma  in Theorem~\ref{expmix} (see \cite{KrSu06} for a complete proof).
A variation can be found in \cite[Chapter 9]{AlSp16} by Alon and Spencer.
We refer readers to the informative survey \cite{KrSu06} by Krivelevich and Sudakov for more about the expander mixing lemma 
and pseudo-random graphs. As mentioned in \cite{KrSu06}, the expander mixing lemma is a truly remarkable result, connecting 
edge distribution and graph spectrum, and providing a very good quantitative handle for the uniformity of edge distribution based 
on graph eigenvalues.

For every two subsets $A$ and $B$ of $V(G)$, let $e(A, B)$ denote the number of edges with
one end in $A$ and the other one in $B$ (edges with both ends in $A\cap B$ are counted twice).
We use $e(A)$ to denote the number of edges with both ends in $A$, and thus $e(A, A) = 2 e(A)$.
\begin{theorem}[Expander Mixing Lemma]
\label{expmix}
Let $G$ be an $(n, d, \lambda)$-graph. Then for every two subsets $A$ and $B$ of $V(G)$, 
\begin{equation}
\label{genmix}
\left| e(A, B) - \frac{d}{n} |A| |B| \right| \le \lambda \sqrt{|A| |B| \left(1-\frac{|A|}{n}\right)\left(1-\frac{|B|}{n}\right)}.
\end{equation}
In particular, 
\begin{equation}
\label{oneset}
\left| e(A) - \frac{d}{2n} |A|^2 \right| \le \frac{\lambda}{2} |A| \left(1-\frac{|A|}{n}\right).
\end{equation}
\end{theorem}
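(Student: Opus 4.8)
The plan is to give the standard spectral proof, realizing $e(A,B)$ as a bilinear form in the adjacency matrix and expanding in an eigenbasis. Let $M$ denote the adjacency matrix of $G$; since $M$ is real symmetric it admits an orthonormal eigenbasis $v_1,\ldots,v_n$ with $Mv_i=\lambda_i v_i$, where $\lambda_1=d$ and $|\lambda_i|\le\lambda$ for all $i\ge2$. Because $G$ is $d$-regular, the all-ones vector is an eigenvector for $\lambda_1=d$, so I may take $v_1=\frac{1}{\sqrt{n}}\mathbf{1}$. Writing $\mathbf{1}_A,\mathbf{1}_B$ for the indicator vectors of $A,B$, the first step is to observe that $e(A,B)=\mathbf{1}_A^{\top}M\,\mathbf{1}_B$: the bilinear form counts ordered pairs of adjacent vertices $(u,v)$ with $u\in A$ and $v\in B$, and the convention that edges inside $A\cap B$ are counted twice is exactly what makes this identity hold with no correction term.

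Next I would expand the indicators in the eigenbasis as $\mathbf{1}_A=\sum_i\alpha_i v_i$ and $\mathbf{1}_B=\sum_i\beta_i v_i$, so that $e(A,B)=\sum_{i=1}^n\lambda_i\alpha_i\beta_i$ by orthonormality. The leading coefficients are the projections onto $v_1$, namely $\alpha_1=\langle\mathbf{1}_A,v_1\rangle=|A|/\sqrt{n}$ and $\beta_1=|B|/\sqrt{n}$, so the $i=1$ term contributes exactly $\lambda_1\alpha_1\beta_1=\frac{d}{n}|A||B|$. This isolates the main term on the left-hand side of \eqref{genmix}, leaving $e(A,B)-\frac{d}{n}|A||B|=\sum_{i=2}^n\lambda_i\alpha_i\beta_i$ to be controlled.

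For the tail, I would use $|\lambda_i|\le\lambda$ for every $i\ge2$ together with the Cauchy--Schwarz inequality:
\begin{equation*}
\left|\sum_{i=2}^n\lambda_i\alpha_i\beta_i\right|\le\lambda\sum_{i=2}^n|\alpha_i|\,|\beta_i|\le\lambda\Bigl(\sum_{i=2}^n\alpha_i^2\Bigr)^{1/2}\Bigl(\sum_{i=2}^n\beta_i^2\Bigr)^{1/2}.
\end{equation*}
The remaining ingredient is Parseval's identity: since $\sum_{i=1}^n\alpha_i^2=\|\mathbf{1}_A\|^2=|A|$ and $\alpha_1^2=|A|^2/n$, I obtain $\sum_{i\ge2}\alpha_i^2=|A|(1-|A|/n)$, and likewise $\sum_{i\ge2}\beta_i^2=|B|(1-|B|/n)$. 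Substituting these two quantities yields precisely the right-hand side of \eqref{genmix}. Finally, \eqref{oneset} follows by setting $B=A$, using $e(A,A)=2e(A)$, and dividing the resulting inequality by $2$.

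I do not anticipate a genuine obstacle, as the argument is short and self-contained. The one point requiring care is matching the double-counting convention in the definition of $e(A,B)$ with the bilinear form $\mathbf{1}_A^{\top}M\,\mathbf{1}_B$; getting this right is what produces the clean main term $\frac{d}{n}|A||B|$. The only further subtlety is remembering to split off the $v_1$-component \emph{before} applying the eigenvalue bound, which is exactly what sharpens the factors to $(1-|A|/n)$ and $(1-|B|/n)$ rather than leaving the weaker estimate $\lambda\sqrt{|A||B|}$.
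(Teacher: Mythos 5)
Your proof is correct and complete: the identity $e(A,B)=\mathbf{1}_A^{\top}M\,\mathbf{1}_B$ does match the paper's double-counting convention exactly, and the split-off of the $v_1$-component before applying Cauchy--Schwarz with Parseval yields precisely the sharpened factors $(1-|A|/n)(1-|B|/n)$ in \eqref{genmix}, with \eqref{oneset} following by setting $B=A$. The paper itself states this lemma without proof, deferring to \cite{KrSu06}, and your argument is exactly the standard spectral proof given there, so there is nothing to bridge.
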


The following lemma has been proved and used by Brouwer and Haemers~\cite{BrHa05}. For the sake of completeness,
we include a short proof here.
\begin{lemma}[\hspace{-1sp}\cite{BrHa05}]
\label{indexlem}
Let $x_1,\cdots, x_c$ be positive integers such that $\sum_{i=1}^c x_i \le 2c-1$. Then for every integer $\ell$ with
$0\le \ell \le \sum_{i=1}^c x_i$, there exists an $I\subset \{1,\cdots, c\}$ such that $\sum_{i\in I} x_i =\ell$.
\end{lemma}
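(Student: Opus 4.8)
The plan is to reduce the statement to the classical characterization of \emph{complete} integer sequences: a non-decreasing sequence of positive integers $y_1 \le y_2 \le \cdots \le y_c$ has the property that every integer in $[0, \sum_i y_i]$ is a subset sum precisely when $y_1 = 1$ and $y_{j+1} \le 1 + \sum_{i=1}^{j} y_i$ for each $1 \le j \le c-1$. So first I would relabel the $x_i$ so that they are non-decreasing; reordering does not change the multiset of attainable subset sums, so it is harmless. The whole task then becomes verifying that the hypothesis $\sum_{i=1}^c x_i \le 2c-1$ forces these two structural conditions.

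The first condition, $x_1 = 1$, is immediate: if the smallest term were at least $2$, then every term would be at least $2$ and the total would be at least $2c$, contradicting $\sum_i x_i \le 2c - 1$.

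The heart of the argument, and the step I expect to be the main obstacle, is the ``no large jump'' condition $x_{j+1} \le 1 + P_j$, where $P_j := \sum_{i=1}^j x_i$. I would argue by contradiction: suppose $x_{j+1} \ge P_j + 2$ for some $1 \le j \le c - 1$. Since the sequence is non-decreasing, each of the $c - j$ terms $x_{j+1}, \ldots, x_c$ is at least $P_j + 2$, so
\[
\sum_{i=1}^c x_i \ \ge\ P_j + (c-j)(P_j + 2) \ =\ P_j(c-j+1) + 2(c-j).
\]
Using $P_j \ge j$ (a sum of $j$ positive integers) and simplifying gives $\sum_i x_i \ge 2c + j(c - j - 1)$, and since $1 \le j \le c-1$ makes $j(c-j-1) \ge 0$, this yields $\sum_i x_i \ge 2c$, contradicting the hypothesis. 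The delicate point is bounding the large tail by $(c-j)(P_j + 2)$ and then substituting the crude estimate $P_j \ge j$ in exactly the right place, so that the resulting quadratic $j(c-j-1)$ is non-negative across the whole admissible range of $j$.

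Finally I would deduce the completeness claim by a short induction on $c$. The base case $c=1$ is trivial. For the inductive step, the prefix $x_1, \ldots, x_{c-1}$ again satisfies both structural conditions, so by induction every integer in $[0, P_{c-1}]$ is a subset sum of the first $c-1$ terms; and for a target $\ell$ in the remaining range $[P_{c-1}+1, P_c]$, the jump condition $x_c \le 1 + P_{c-1}$ guarantees that $\ell - x_c$ lies in $[0, P_{c-1}]$, so representing $\ell - x_c$ by a subset of $\{1, \ldots, c-1\}$ and adjoining the index $c$ represents $\ell$. Since the two ranges cover $[0, P_c]$, this closes the induction and proves the lemma.
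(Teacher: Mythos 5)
Your proof is correct, but it takes a genuinely different route from the paper's. The paper proves the lemma by a single direct induction on $c$ that carries the hypothesis $\sum_{i=1}^c x_i \le 2c-1$ along: after sorting, it sets $\ell' = \ell$ if $\ell \le c-1$ and $\ell' = \ell - x_c$ otherwise, observes that the prefix satisfies $c-1 \le \sum_{i=1}^{c-1} x_i \le 2(c-1)-1$, and applies the inductive hypothesis to the first $c-1$ terms --- essentially three lines. You instead factor the argument through the classical completeness criterion for integer sequences (Brown's criterion): you spend the hypothesis $\sum_{i=1}^c x_i \le 2c-1$ exactly once, up front, to establish $x_1=1$ and the no-large-jump conditions $x_{j+1} \le 1 + P_j$ (your estimate $\sum_i x_i \ge P_j(c-j+1) + 2(c-j) \ge 2c + j(c-j-1) \ge 2c$ is valid on the whole range $1 \le j \le c-1$, since $c-j+1>0$ justifies substituting $P_j \ge j$), and then run the completeness induction from those structural conditions alone. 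The two closing inductions are parallel in spirit --- both decide whether to adjoin the largest element --- but they use different thresholds ($\ell \le c-1$ in the paper versus $\ell \le P_{c-1}$ for you) and propagate different invariants (the sum bound versus the jump conditions). What your route buys is modularity and generality: the intermediate statement holds under a strictly weaker hypothesis than the lemma's (it covers, e.g., $1,2,4,\dots,2^{c-1}$, whose sum far exceeds $2c-1$), and it isolates exactly what the bound $2c-1$ is needed for. What the paper's route buys is brevity: no intermediate characterization at all, at the modest cost of leaving implicit the checks that $\ell'$ stays in range and that the prefix inherits the sum bound (the latter uses that $x_c \ge 2$ whenever $\sum_i x_i = 2c-1$ and $c \ge 2$).
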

\begin{proof}
The proof provided by Brouwer and Haemers~\cite{BrHa05} goes by induction on $c$. The case $c=1$ is trivial.
Let $c\ge 2$ and assume that $x_1\le \cdots \le x_c$. Suppose it is true for $c-1$ integers.
Let $\ell' = \ell$ if $\ell \le c-1$ and $\ell' = \ell - x_c$ otherwise.
Since $c-1\le \sum_{i=1}^{c-1} x_i = \sum_{i=1}^c x_i - x_c\le 2(c-1)-1$, by inductive hypothesis,
there exists an $I'\subset \{1,\cdots, c-1\}$ such that $\sum_{i\in I'} x_i =\ell'$.
Let $I = I'$ or $I = I'\cup \{c\}$, as desired.
\end{proof}

Now we are ready to present the proof of the main theorem.
\begin{proof}[\bf Proof of Theorem~\ref{tofg}]
Suppose to the contrary that 
\begin{equation}
\label{contr}
t(G) < \frac{d}{\lambda}-1.
\end{equation}
By definition, suppose $S$ is a subset of $V(G)$ such that $\frac{|S|}{c(G-S)}=t(G)$. Let $B = V(G-S)$.
Denote $|V(G)|=n$, $c(G-S)=c$ and $t(G)=t$. Then $|S| = tc$ and so $|B| = n - tc$.

First we show that
\begin{equation}
\label{cupb}
c\le \frac{\lambda n}{d + \lambda}.
\end{equation}
In fact, (\ref{cupb}) can be obtained from the well-known Hoffman ratio bound. Here we give a direct proof.
To see this, let $U$ be a set of vertices that consists of exactly one vertex from each component of $G-S$. 
Then $|U|=c$ and $e(U)=0$. By (\ref{oneset}), $\left| e(U) - \frac{d}{2n} |U|^2 \right| \le \frac{\lambda}{2} |U| \left(1-\frac{|U|}{n}\right)$,
and thus $\frac{d}{n} |U| \le \lambda \left(1-\frac{|U|}{n}\right)$, which implies that $c=|U|\le  \frac{\lambda n}{d + \lambda}$.\\

If $|B| \le \frac{2\lambda n}{d+ \lambda}$, then $tc = n - |B| \ge \frac{(d - \lambda)n}{d+\lambda}$. Together with (\ref{cupb}), we have
$$t= (n - |B|)/c \ge \frac{(d - \lambda)n}{d+\lambda} \cdot \frac{d + \lambda}{\lambda n} 
= \frac{d - \lambda}{\lambda} = \frac{d}{\lambda} -1,$$
contrary to (\ref{contr}). Thus, we may assume that 
\begin{equation}
\label{B-lowb}
|B| > \frac{2\lambda n}{d+ \lambda},
\end{equation}
By (\ref{cupb}) and (\ref{B-lowb}), we have $|B| > 2c$, that is
\begin{equation}
\label{bvsc}
|B|\ge 2c+1.
\end{equation}

Let $V_1,\cdots, V_c$ denote the vertex sets of the $c$ components of $G-S$.
Without loss of generality, we may assume that $|V_1|\le \cdots\le |V_c|$.

\par\medskip
\noindent
{\bf Claim 1:} $\sum_{i=1}^{c-1} |V_i|\ge c$.
\\Proof of Claim 1: Otherwise, $\sum_{i=1}^{c-1} |V_i| = c-1$, that is, each $V_i$ is a single vertex for $i=1,\cdots, c-1$. 
Let $A= \cup_{i=1}^{c-1} V_i$ and thus $|A|=c-1$ and $e(A, B) = 0$. By (\ref{genmix}),
$$\frac{d}{n} |A| |B| \le \lambda \sqrt{|A| |B| \left(1-\frac{|A|}{n}\right)\left(1-\frac{|B|}{n}\right)}
< \lambda \sqrt{|A| |B|\left(1-\frac{|B|}{n}\right)},$$
and so
$$\left(\frac{d}{n} |A| |B| \right)^2 < \lambda^2 |A| |B|\left(1-\frac{|B|}{n}\right),$$
which implies that
$$\frac{d^2 |A| |B|}{\lambda^2 n} < n-|B| = tc.$$

Thus 
$$t > \frac{|A|}{c}\cdot \frac{d^2}{\lambda^2}\cdot \frac{|B|}{n} = \frac{c-1}{c}\cdot \frac{d^2}{\lambda^2}\cdot \frac{|B|}{n}
= \left(1-\frac{1}{c} \right) \cdot \frac{d^2}{\lambda^2}\cdot \frac{|B|}{n}\ge \frac{d^2}{2\lambda^2}\cdot \frac{|B|}{n}.$$

By (\ref{B-lowb}), $\frac{|B|}{n} > \frac{2\lambda}{d+ \lambda}$ and we have
$$t> \frac{d^2}{2\lambda^2}\cdot \frac{2\lambda}{d+ \lambda} = \frac{d^2}{\lambda(d+ \lambda)}
=\frac{d^2 - \lambda^2 + \lambda^2}{\lambda(d+ \lambda)} 
= \frac{d^2 -\lambda^2}{\lambda(d+ \lambda)} + \frac{\lambda^2}{\lambda(d+ \lambda)}
> \frac{d}{\lambda} -1,$$ contrary to (\ref{contr}). This completes the proof of Claim 1.

\par\medskip
\noindent
{\bf Claim 2:} $V_1,\cdots, V_c$ can be partitioned into two disjoint sets $X$ and $Y$ such that $e(X, Y)=0$, $|X|\ge c$ and  $|Y|\ge c$.
\\Proof of Claim 2:
If $|V_c|\ge c$, then  let $X = \cup_{i=1}^{c-1} V_i$ and $Y = B-X = V_c$. Then $|Y|\ge c$ and by Claim 1, $|X|\ge c$.
Thus we may assume that $|V_c| \le c-1$. We also notice that $|V_c|\ge 3$, for otherwise, $|B|=\sum_{i=1}^c |V_i| \le 2c$, contradicting
(\ref{bvsc}). Thus $3\le |V_c| \le c-1$. Let $\ell = c - |V_c|$. Then $1\le \ell \le c-3$.

If $\sum_{i=1}^{c-1} |V_i|\le 2(c-1) -1 = 2c-3$, then by Lemma~\ref{indexlem}, there exists an $I\subset \{1,\cdots, c-1\}$ such that 
$\sum_{i\in I} |V_i| =\ell$. Let $X = \cup_{i\in I} V_i \cup V_c$ and $Y = B-X$. Clearly $|X| = \ell + |V_c| =c$ and $|Y| = |B| -|X|\ge c+1$.

If $\sum_{i=1}^{c-1} |V_i| > 2(c-1) -1 = 2c-3$, then let $V'_i$ be a nonempty subset of $V_i$ for each $i=1,2,\cdots c-1$ such that 
$\sum_{i=1}^{c-1} |V'_i| = 2c-3$. We can easily do it by removing vertices from some $V_i$'s one by one until the total number of 
remaining vertices is exactly $2c-3$ (keep at least one vertex in each subset). By Lemma~\ref{indexlem}, there exists an 
$I\subset \{1,\cdots, c-1\}$ such that $\sum_{i\in I} |V'_i| =\ell$. Then $\sum_{i\not\in I, i< c} |V'_i| = (2c-3)-\ell \ge (2c-3) - (c-3) = c$.
Let $X = \cup_{i\in I} V_i \cup V_c$ and $Y = B-X = \cup_{i\not\in I, i< c} V_i$. 
Clearly $|X| \ge \ell + |V_c| =c$ and $|Y| = \sum_{i\not\in I, i< c} |V_i|\ge \sum_{i\not\in I, i< c} |V'_i| \ge c$.
This completes the proof of Claim 2.

\par\medskip
By Claim 2, $V_1,\cdots, V_c$ can be partitioned into two disjoint sets $X$ and $Y$ such that $e(X, Y)=0$, $|X|\ge c$ and  $|Y|\ge c$.

Since $e(X, Y)=0$, by (\ref{genmix}),
$\frac{d}{n} |X| |Y| \le \lambda \sqrt{|X| |Y| \left(1-\frac{|X|}{n}\right)\left(1-\frac{|Y|}{n}\right)},$
and so $$ d^2 |X|^2 |Y|^2 \le \lambda^2 |X| |Y| \left(n- |X|\right)\left(n-|Y|\right),$$
which implies that 
\begin{equation}
\label{xynoeg}
|X| |Y| \le \frac{\lambda^2}{d^2} (n - |X|)(n - |Y|).
\end{equation}

Without loss of generality, we may assume that $|X|\le |Y|$. By (\ref{xynoeg}), we have
$$ |X|^2 \le |X|\cdot |Y| \le \frac{\lambda^2}{d^2}(n-|X|)(n-|Y|)\le \frac{\lambda^2}{d^2}(n-|X|)^2,$$
that is $$|X| \le \frac{\lambda}{d} (n - |X|),$$
and hence 
\begin{equation}
\label{xxbound}
|X| \le \frac{\lambda n}{d+\lambda}.
\end{equation}

Also, since $|Y| = n - |S| - |X|$, by (\ref{xynoeg}) again, we have
$$ |X| (n - |S| - |X|) = |X|\cdot |Y| \le \frac{\lambda^2}{d^2}(n - |X|)(n - |Y|) = \frac{\lambda^2}{d^2}(n - |X|)(|S| + |X|),$$
which implies that
\begin{equation}
\label{xnxn}
|X|n \le \left(\frac{\lambda^2}{d^2}(n - |X|) + |X| \right) \left(|S| + |X| \right)
= \left(\frac{\lambda^2}{d^2}n + \frac{d^2 - \lambda^2}{d^2}|X| \right) \left(|S| + |X| \right).
\end{equation}
By (\ref{xxbound}), we have
$$\frac{d^2 - \lambda^2}{d^2}|X| \le \frac{d^2 - \lambda^2}{d^2}\cdot \frac{\lambda n}{d+\lambda} = \frac{d\lambda - \lambda^2}{d^2}n,$$
plugging in (\ref{xnxn}),
$$|X|n \le \left(\frac{\lambda^2}{d^2}n + \frac{d\lambda - \lambda^2}{d^2}n \right) \left(|S| + |X| \right) = \frac{\lambda n}{d} \left(|S| + |X| \right),$$
and we have
$$ |X| \le \frac{\lambda}{d} \left(|S| + |X| \right).$$
Hence 
\begin{equation*}
tc = |S| \ge \left(\frac{d}{\lambda} -1 \right) |X|,
\end{equation*}
implying that
$$t \ge \left(\frac{d}{\lambda} -1 \right)\cdot \frac{|X|}{c} \ge \frac{d}{\lambda} -1,$$
completing the proof of the theorem.
\end{proof}

\end{document}